\documentclass[12pt,leqno]{amsart}

\usepackage{amsmath,amsfonts,amssymb,amsthm,mathrsfs}
\usepackage{curves}  
\usepackage{epic}  
\usepackage[all]{xy}   
\usepackage{graphicx}
\usepackage{epsfig}

\usepackage[usenames]{color}

\setlength{\textwidth}{16.4cm}
\setlength{\textheight}{21.5cm}
\setlength{\hoffset}{-1.9cm}
%
%


\newtheorem{thm}{Theorem}[section]

\newtheorem{cor}[thm]{Corollary}
\newtheorem{lem}[thm]{Lemma}

\newtheorem{example}[thm]{Example}

\newtheorem{rem}[thm]{Remark}

\DeclareMathOperator{\diam}{diam}


      \newcommand{\N}{{\mathbb N}}
      \newcommand{\R}{{\mathbb R}}

\def\dist{\qopname\relax o{dist}}
\def\b{\qopname\relax o{b}}

\title[Characterizations to the fractional \\Sobolev inequality]{Characterizations to the fractional \\Sobolev inequality}

\author{Ritva Hurri-Syrj\"anen and Antti V. V\"ah\"akangas}
\address{Department of Mathematics and Statistics, 
Gustaf H\"allstr\"omin katu 2$\b$, FI-00014 University of Helsinki, Finland}
\email{ritva.hurri-syrjanen@helsinki.fi}
\email{antti.vahakangas@helsinki.fi}
\date{\today}

\begin{document}

\begin{abstract} We characterize the fractional Sobolev inequality with fractional isocapacitary and isoperimetric inequalities. 
We give a sufficient condition and examples so that the fractional capacity of the closure of an open set is bounded above by the fractional perimeter of its interior. 
\end{abstract}

\keywords{Fractional Sobolev inequality, isocapacitary inequality, isoperimetric inequality.}
\subjclass[2010]{26D10 (46E35)}

\maketitle

\markboth{\textsc{R. Hurri-Syrj\"anen and A. V. V\"ah\"akangas}}
{\textsc{Characterizations to the fractional Sobolev inequality}}

\section{Introduction}

Let $G$ be an open set in $\R^n$ and $\delta \in (0,1)$ be given. If $1\le p <n/\delta$ and if there is a constant $C$ such that
the inequality
\begin{equation}\label{sobolev}
\bigg(\int_G \lvert u(x)\rvert^{np/(n-\delta p)}\,dx\bigg)^{(n-\delta p)/np} \le C
 \bigg( \int_G\int_G\frac{\lvert u(x)-u(y)\rvert^p}{\lvert x-y\rvert^{n+\delta p}}\,
dy\,dx\,\bigg)^{1/p}
\end{equation}
holds for all measurable functions $u:G\to\R$ with  compact support in $G$, then inequality
\eqref{sobolev} is called a fractional Sobolev inequality. In the case $p=1$ we characterize this inequality with
the fractional $(\delta ,1)$-capacity
$\mathrm{cap}_{\delta,1}(\cdot ,G)$
and with the fractional $\delta$-perimeter
$P_\delta(\cdot ,G)$. 
For the definitions we refer to Section
\ref{notation}.

We state our characterization theorem.

\begin{thm}\label{t.equiv}
Suppose that  $G$ is an open set in $\R^n$. Let
$\delta\in (0,1)$ and a constant $C>0$ be given. Then the following conditions are equivalent.
\begin{itemize}
\item[(A)]  The fractional Sobolev inequality
\[
\bigg(\int_G \lvert u(x)\rvert^{n/(n-\delta)}\,dx\bigg)^{(n-\delta)/n} \le C 
\int_G\int_G\frac{\lvert u(x)-u(y)\rvert}{\lvert x-y\rvert^{n+\delta}}\,
dy\,dx
\]
holds for all measurable functions $u:G\to \R$ with compact support in $G$.
\item[(B)] The fractional isocapacitary inequality
\[
\lvert K\rvert^{(n-\delta)/n} \le C\,\mathrm{cap}_{\delta,1}(K,G)
\]
holds for every compact set $K$ in $G$.
\item[(C)] 
The fractional isoperimetric inequality
\[
\lvert D\rvert^{(n-\delta)/n} \le 2C\,P_\delta(D,G)
\]
holds for every  open set $D\subset\subset G$ whose boundary $\partial D$  is an $(n-1)$-dimensional
$C^\infty$-manifold in $G$. 
\end{itemize}
\end{thm}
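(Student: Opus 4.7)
The plan is to prove the cyclic chain $\mathrm{(A)}\Rightarrow\mathrm{(B)}\Rightarrow\mathrm{(C)}\Rightarrow\mathrm{(A)}$, which is the classical Maz'ya strategy adapted to the fractional setting. Two structural identities carry the argument. First, for any measurable $D\subset G$,
\[
\int_G\int_G\frac{\lvert\chi_D(x)-\chi_D(y)\rvert}{\lvert x-y\rvert^{n+\delta}}\,dy\,dx=2P_\delta(D,G),
\]
so that $\chi_D$ (up to approximation by admissible cutoffs) is a natural competitor for the $(\delta,1)$-capacity. Second, for nonnegative $u$ with compact support in $G$, the fractional coarea formula
\[
\int_G\int_G\frac{\lvert u(x)-u(y)\rvert}{\lvert x-y\rvert^{n+\delta}}\,dy\,dx=2\int_0^\infty P_\delta(\{u>t\},G)\,dt
\]
follows from the pointwise identity $\lvert u(x)-u(y)\rvert=\int_0^\infty\lvert\chi_{\{u>t\}}(x)-\chi_{\{u>t\}}(y)\rvert\,dt$ by Fubini.

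For $\mathrm{(A)}\Rightarrow\mathrm{(B)}$, I apply $\mathrm{(A)}$ to any competitor $u$ for $\mathrm{cap}_{\delta,1}(K,G)$, use the trivial bound $\int_G\lvert u\rvert^{n/(n-\delta)}\,dx\ge\lvert K\rvert$ on the left, and infimize over $u$. For $\mathrm{(B)}\Rightarrow\mathrm{(C)}$, given $D\subset\subset G$ with smooth $\partial D$, I use $\chi_D$ (approximated if needed by standard mollified cutoffs from above, whose Gagliardo seminorms converge to $2P_\delta(D,G)$) as a competitor for $\mathrm{cap}_{\delta,1}(\overline D,G)$; the first identity then gives $\mathrm{cap}_{\delta,1}(\overline D,G)\le 2P_\delta(D,G)$, and $\mathrm{(B)}$ applied to $K=\overline D$ yields $\mathrm{(C)}$ since $\lvert\overline D\rvert=\lvert D\rvert$.

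For $\mathrm{(C)}\Rightarrow\mathrm{(A)}$, I first reduce to the case $u\in C_c^\infty(G)$ with $u\ge 0$, using that the Gagliardo seminorm of $\lvert u\rvert$ is dominated by that of $u$ and is preserved in the limit under standard mollification. By Sard's theorem, for almost every $t>0$ the superlevel set $\{u>t\}$ is open, compactly contained in $G$, and has smooth boundary in $G$, so $\mathrm{(C)}$ applies. Integrating the inequality $\lvert\{u>t\}\rvert^{(n-\delta)/n}\le 2C\,P_\delta(\{u>t\},G)$ in $t$ and invoking the coarea identity produces
\[
\int_0^\infty\lvert\{u>t\}\rvert^{(n-\delta)/n}\,dt\le C\int_G\int_G\frac{\lvert u(x)-u(y)\rvert}{\lvert x-y\rvert^{n+\delta}}\,dy\,dx.
\]
Minkowski's integral inequality applied to the representation $u(x)=\int_0^\infty\chi_{\{u>t\}}(x)\,dt$ gives
\[
\Big(\int_G u^{n/(n-\delta)}\,dx\Big)^{(n-\delta)/n}\le\int_0^\infty\lvert\{u>t\}\rvert^{(n-\delta)/n}\,dt,
\]
and chaining these two estimates produces $\mathrm{(A)}$ for smooth $u$, hence for general $u$ by approximation.

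The main obstacle is the last implication: the two reductions (to $u\ge 0$ and to $u$ smooth) must preserve the Gagliardo seminorm in the limit, and the factor $2$ in $\mathrm{(C)}$ must track cleanly through the coarea formula, which it does precisely because $P_\delta$ is the one-sided perimeter. A secondary technicality, in $\mathrm{(B)}\Rightarrow\mathrm{(C)}$, is matching the admissibility class of $\mathrm{cap}_{\delta,1}$ to the non-smooth competitor $\chi_D$; the smoothness of $\partial D$ and the compact containment $D\subset\subset G$ render the needed approximation routine.
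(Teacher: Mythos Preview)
Your cyclic strategy $\mathrm{(A)}\Rightarrow\mathrm{(B)}\Rightarrow\mathrm{(C)}\Rightarrow\mathrm{(A)}$, the fractional coarea identity, Sard's theorem, and Minkowski's integral inequality are exactly what the paper uses, and your treatment of $\mathrm{(A)}\Rightarrow\mathrm{(B)}$ and $\mathrm{(C)}\Rightarrow\mathrm{(A)}$ matches the paper's proof of its more general Theorem~4.1 (the paper also makes the mollification step in $\mathrm{(C)}\Rightarrow\mathrm{(A)}$ explicit via a separate approximation lemma showing $\lvert u-u\ast\varphi_j\rvert_{W^{\delta,1}(G)}\to 0$).

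The one place your route diverges is $\mathrm{(B)}\Rightarrow\mathrm{(C)}$. You propose approximating $\chi_D$ by ``mollified cutoffs from above'' and declare the convergence of their seminorms to $2P_\delta(D,G)$ routine. This is correct for smooth $\partial D$ (a thin tube of width $r$ around $\partial D$ has fractional perimeter $O(r^{1-\delta})\mathcal{H}^{n-1}(\partial D)\to 0$), but the paper does not argue this way: it proves a standalone result (Theorem~3.1) that $\mathrm{cap}_{\delta,1}(\overline D,G)\le 2P_\delta(D,G)$ whenever $\mathcal{H}^{n-\delta}(\partial D)=0$, via an explicit covering construction that makes the $(G\setminus D)\times(G\setminus D)$ contribution to the test function's seminorm arbitrarily small. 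The paper's route is more general (it applies, e.g., to quasiballs) and is where most of the technical work lives; your route is more direct for the smooth-boundary case at hand but leaves the convergence claim unproved. Either way the constant $2$ tracks through correctly.
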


Theorem \ref{t.equiv} is a corollary of a more general result in Section
\ref{m_proof}.  We emphasize that the best constant is the same in each inequality of cases (A), (B), and (C). Our motivation has been the work of Vladimir Maz'ya on the equivalence of the classical Sobolev type inequalities and the classical isoperimetric and isocapacitary inequalities, \cite{Maz2}, \cite{Maz}.

Fractional isoperimetric inequalities have been studied,
for example, by Rupert L.  Frank and Robert Seiringer in \cite{FS} and their stability versions by Nicola Fusco, Vincent Millot, and Massimiliano Morini in \cite{FMM}.
For an example of a related non-fractional case we refer to
\cite{CFMP}.
We prove that
the sets $D\subset\subset G$ from case (C) are examples of the sets which satisfy the following inequality
\begin{equation}\label{capacity_perimeter_ineq}
\mathrm{cap}_{\delta,1}(\overline{D},G) \le 2P_\delta(D,G)\,,
\end{equation}
we refer to Section \ref{s.on}. 
Quasiballs are also examples of these sets when $G=\R^n$, Example~\ref{e.ex}. 
We give a sufficient condition for sets
to satisfy inequality \eqref{capacity_perimeter_ineq} in
Theorem \ref{l.regular}. We note that the left hand side of inequality \eqref{capacity_perimeter_ineq} may be viewed as a lower
bound  for \[\lvert \chi_{D}\rvert_{W^{\delta,1}(G)}= 2P_\delta(D,G)\,.\]
Hence, this inequality  is related to the question if the characteristic function
$\chi_{D}$ belongs to the fractional homogeneous Sobolev space $\dot{W}^{\delta,1}(G)$,  \cite{Faraco}.

\section{Notation and preliminaries}\label{notation}

Throughout the paper we assume that $G$ is an open set in the Euclidean $n$-space
$\R^n$, $n\geq 2$. 
The open ball centered at $x\in \R^n$ and with radius $r>0$ is  $B^n(x,r)$.
The Euclidean
distance from $x\in G$ to the boundary of $G$ is written as $\dist(x,\partial G)$.
The diameter of a set $A$ in $\R^n$ is $\mathrm{diam}(A)$.
The Lebesgue $n$-measure of a  measurable set $A$ is denoted by $\vert A\vert.$
We write $\chi_A$ for the characteristic function of a set $A$.

The family $C_0(G)$ consists of all continuous functions $u:G\to \R$ with compact support in $G$. 
If $u:G\to \R$ and $t\in\R$ then we write shortly \[\{u>t\}=\{x\in G\,:\,u(x)>t\}\]
and likewise for the sets $\{u=t\}$ and $\{u<t\}$.
We let $C(\ast,\dotsb,\ast)$  denote a constant which depends on the quantities appearing
in the parentheses only.

Let $G$ be an open set in $\R^n$. Let $0< p<\infty$ and $0<\delta<1$ be given. We write
\[
\lvert u \rvert_{W^{\delta,p}(G)} = \bigg( \int_G\int_G\frac{\lvert u(x)-u(y)\rvert^p}{\lvert x-y\rvert^{n+\delta p}}\,
dy\,dx\,\bigg)^{1/p}
\]
for real-valued measurable  functions $u$ on $G$.
The homogeneous fractional Sobolev space
$\dot{W}^{\delta,p}(G)$ consists
of all measurable functions $u:G\to \R$  
with $\lvert u\rvert_{W^{\delta,p}(G)}<\infty$. 

The following lemma from \cite[Lemma 2.6]{H-SV} tells that the functions
$u\in \dot{W}^{\delta,p}(G)$
are locally $L^p$-integrable in $G$ that is, 
$u\in L^p_{\textup{loc}}(G)$.

\begin{lem}\label{l.integrability}
Suppose that $G$ is an open set in $\R^n$. Let $0< p<\infty$ and $0<\delta<1$ be given.
Let $K$ be a compact set in $G$.  
If $u\in \dot{W}^{\delta,p}(G)$, then $u\in L^p(K)$. 
\end{lem}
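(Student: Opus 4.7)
The plan is to reduce the statement to a ball-by-ball assertion and then exploit the finiteness of the Gagliardo seminorm on small balls together with Fubini's theorem to produce a reference point with finite value. Since $K$ is compact and $G$ is open, $K$ can be covered by finitely many open balls $B_i = B^n(x_i,r_i)$ with $\overline{B_i}\subset G$; it therefore suffices to show $u\in L^p(B)$ whenever $B=B^n(x_0,r)$ is such a ball.

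For a fixed such ball, the key observation is that on $B\times B$ the kernel $\lvert x-y\rvert^{-n-\delta p}$ is bounded below by $(2r)^{-n-\delta p}$, so
\[
\int_B\int_B \lvert u(x)-u(y)\rvert^p\,dy\,dx
\le (2r)^{n+\delta p}\lvert u\rvert_{W^{\delta,p}(G)}^p <\infty.
\]
By Fubini's theorem, the inner integral $y\mapsto \int_B\lvert u(x)-u(y)\rvert^p\,dx$ is finite for almost every $y\in B$, and since $u$ is real-valued and measurable it is finite almost everywhere; hence I can choose a point $y_0\in B$ at which both properties hold.

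With this $y_0$ fixed, $u(y_0)$ is a finite constant and $x\mapsto u(x)-u(y_0)$ lies in $L^p(B)$. To recover control of $u$ itself I apply the elementary inequality
\[
\lvert a\rvert^p \le c_p\bigl(\lvert a-b\rvert^p + \lvert b\rvert^p\bigr),
\]
valid for all $a,b\in\R$ with $c_p=1$ when $0<p\le 1$ and $c_p=2^{p-1}$ when $p\ge 1$, taking $a=u(x)$ and $b=u(y_0)$. Integrating in $x$ over $B$ gives $\int_B\lvert u(x)\rvert^p\,dx<\infty$, and summing over the finite cover yields $u\in L^p(K)$.

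I do not anticipate a serious obstacle: the argument is a standard Poincar\'e-type reduction from Gagliardo differences to pointwise values via a base point. The only minor subtlety is handling the range $0<p<1$, where the triangle inequality fails; this is absorbed by the inequality above, which is valid for every $p>0$ with a $p$-dependent constant.
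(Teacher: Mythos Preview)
Your argument is correct. The paper does not actually prove this lemma; it merely quotes it from \cite[Lemma~2.6]{H-SV}, so there is no in-paper proof to compare against. The ball-by-ball reduction combined with the Fubini/base-point trick you use is the standard route to this kind of local $L^p$-integrability statement, and every step (the lower bound on the kernel over $B\times B$, the choice of $y_0$, and the elementary $c_p$-inequality covering the full range $0<p<\infty$) is sound.
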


For a compact set $K$ in $G$, its fractional
 $(\delta,p)$-capacity is the number
\[
\mathrm{cap}_{\delta,p}(K,G) = \inf_u \lvert u\rvert_{W^{\delta,p}(G)}^p\,,
\]
where the infimum is taken over all functions  $u\in C_0(G)$ such that $u(x)\ge 1$ for each $x\in K$.

The fractional $\delta$-perimeter of a given measurable set with respect to $G$ is defined as
\[
P_\delta(A,G) = \int_A \int_{G\setminus A} \frac{1}{\lvert x-y\rvert^{n+\delta}}\,dy\,dx\,.
\]
We note that
\begin{equation*}
P_\delta(A,G)=\frac{1}{2}\lvert \chi_A\rvert_{W^{\delta,1}(G)}\,.
\end{equation*}
Suppose that $A$ is a  non-empty compact set in $\R^n$ and let $s\in [0,n]$. 
The lower $s$-dimensional Minkowski content
 $\mathcal{M}^{s}_\ast(A)$ is defined by 
 \[
  \mathcal{M}^{s}_\ast(A) = \liminf_{r\to 0+} \frac{\lvert A+B^n(0,r)\rvert}{r^{n-s}}\,.
 \]
The $s$-dimensional Hausdorff measure of $A$ is written as $\mathcal{H}^s(A)$.
We recall from \cite[p. 79]{Mat95} that 
 there is a constant $C=C(s,n)>0$ such that
 $\mathcal{H}^{s}(A)\le C\mathcal{M}^{s}_\ast(A)$.

\begin{rem}\label{r.comparion}
Suppose that $A$ is a non-empty compact set in $\R^n$. If $0<\delta<1$ and $\delta <\rho\le n$ are
given such that $\mathcal{H}^{n-\rho}(A)<\infty$, then $\mathcal{H}^{n-\delta}(A)=0$.
We refer to \cite[Theorem 4.7]{Mat95}.
\end{rem}

%
 
 We say that a closed set $\Gamma\subset G$ is an $(n-1)$-dimensional $C^\infty$-manifold in an open set $G$,
if  for each point $x\in \Gamma$ there
 exist open sets $U$ in $ \R^{n-1}$ and $V$ in  $\R$ and
 a smooth function $g:U\to V$ such that $x=\psi(x)\in \psi(U\times V)$ and
 \[
\Gamma \cap \psi(U\times V) =  \{ \psi(y,g(y))\,:\, y\in U\}
 \]
with a rotation $\psi$ about the point $x$.

 \begin{lem}\label{l.sard_type}
If $\Gamma$ is a compact $(n-1)$-dimensional $C^\infty$-manifold in an
 open set $G$ in $\R^n$, then $\mathcal{H}^{n-1}(\Gamma)<\infty$. 
If $u\in C^\infty_0(G)$, then 
 \begin{equation}\label{e.min}
\mathcal{H}^{n-1}(\{x\in G\,:\,u(x)=t\})<\infty
 \end{equation} for almost
 every $t>0$.
 \end{lem}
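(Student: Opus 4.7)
The plan is to prove the two assertions in order: the first by a finite coordinate-chart argument, and the second by reducing to the first via Sard's theorem.

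For the statement $\mathcal{H}^{n-1}(\Gamma)<\infty$, I would use the compactness of $\Gamma$ to extract a finite subcover by coordinate neighborhoods of the form $\psi(U\times V)$ provided by the definition of an $(n-1)$-dimensional $C^\infty$-manifold. After passing to relatively compact subdomains $U'\subset\subset U$, in each such chart the piece of $\Gamma$ contained in $\psi(U'\times V)$ is the image of $U'$ under the parametrization $y\mapsto \psi(y,g(y))$, which, being the composition of a rigid motion with the smooth graph map of $g\in C^\infty(\overline{U'})$, is Lipschitz with some constant $L$. The standard inequality $\mathcal{H}^{n-1}(f(E))\le L^{n-1}\mathcal{H}^{n-1}(E)$ for Lipschitz maps then gives $\mathcal{H}^{n-1}(\Gamma\cap \psi(U'\times V))\le L^{n-1}\lvert U'\rvert<\infty$, and summing over the finitely many patches yields the first claim.

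For the second claim, I would invoke Sard's theorem: the set of critical values of $u\in C_0^\infty(G)$ has Lebesgue measure zero in $\R$. For every regular value $t>0$ of $u$, the implicit function theorem shows that $\{u=t\}$ is either empty or an $(n-1)$-dimensional $C^\infty$-manifold in $G$ in exactly the sense defined in the paper. Since $u$ has compact support and $t>0$, one has $\{u=t\}\subset \mathrm{supp}(u)$, so the level set is a closed subset of a compact set in $G$, hence compact. The first part of the lemma therefore gives $\mathcal{H}^{n-1}(\{u=t\})<\infty$ for every regular value $t>0$, and in particular for almost every $t>0$.

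The only mildly delicate point is organizing the finite cover in the first step so that the Lipschitz estimate applies uniformly; this is routine because the closed subpatches $\overline{U'}\times\overline{V}$ are compact and each $g$ is smooth there. Everything else follows from classical ingredients, namely Sard's theorem, the implicit function theorem, and the behavior of Hausdorff measure under Lipschitz maps.
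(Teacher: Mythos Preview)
Your proposal is correct and follows essentially the same route as the paper: a finite covering of $\Gamma$ by smooth graph patches, the Lipschitz property of each parametrization $y\mapsto\psi(y,g(y))$ on a compact subdomain, the inequality $\mathcal{H}^{n-1}(f(E))\le L^{n-1}\mathcal{H}^{n-1}(E)$, and then Sard's theorem together with the implicit function theorem for the level-set statement. The only cosmetic difference is that the paper first invokes Kirszbraun's extension theorem before applying the Lipschitz image bound, whereas you apply it directly on the compact parameter domain; this does not affect the argument.
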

 
 \begin{proof}
By compactness of $\Gamma$ there are open sets $U_j$ in $\R^{n-1}$ and $V_j$ in  $\R$
and closed balls $ B_j$ in  $U_j$, $j=1,\ldots,N$,  such that
\[
\Gamma = \bigcup_{j=1}^N \{Ê\psi_j(y,g_j(y))\,:\, y\in  B_j\}
\]
for rotations $\psi_j$ about points $x_j$ in $\Gamma$.
Since the function $y\mapsto \psi_j(y,g_j(y))$ is $L$-Lipschitz on $B_j\subset\subset U_j$ with some constant $L$, by Kirszbraun theorem on Lipschitz extension
there exists an $L$-Lipschitz function $f_j:\R^{n-1}\to \R^n$ with $f_j(y)=\psi_j(y,g_j(y))$ if $y\in B_j$.
Hence, we have
\begin{equation*}
\mathcal{H}^{n-1}(
f_j( B_j)) \le  L^{n-1}\mathcal{H}^{n-1}( B_j)<\infty\,.
\end{equation*}
By subadditivity
we obtain 
\[
\mathcal{H}^{n-1}(\Gamma) 
\le \sum_{j=1}^N \mathcal{H}^{n-1}(f_j( B_j))<\infty\,.
\] 
Inequality \eqref{e.min} for almost every $t>0$ is a consequence of Sard's theorem \cite{Sard}
which, together with the implicit function theorem, implies that the level set $\{ x\in G\,:\, u(x)=t\}$
is a compact $(n-1)$-dimensional $C^\infty$-manifold in $G$ for almost every $t>0$.
\end{proof}

\section{On the fractional capacity and perimeter}\label{s.on}

Suppose that $G$ is an open set in $\R^n$ and that $0<\delta<1$ is given.  
Theorem \ref{l.regular} gives a suffcient condition for open sets $D\subset\subset G$ in order that
the inequality 
\begin{equation}\label{frac_cap_per}
\mathrm{cap}_{\delta,1}(\overline{D},G) \le 2P_\delta(D,G)
\end{equation}
holds.
For concrete examples of the sets which satisfy inequality \eqref{frac_cap_per} we refer 
to Example~\ref{e.ex}. 

\begin{thm}\label{l.regular}
Suppose that $G$ is an open set in $\R^n$ and $0<\delta<1$.
If $D\subset\subset G$ is an open set such that $\mathcal{H}^{n-\delta}(\partial D)=0$, 
then inequality \eqref{frac_cap_per} holds with respect to $G$.
\end{thm}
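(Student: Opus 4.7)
We may assume $P_\delta(D,G)<\infty$, since otherwise the inequality is trivial. The plan is a two-step reduction: first thicken $D$ using a cover of $\partial D$ afforded by the hypothesis, then apply a distance-truncation test function to the thickened set.

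For the first step, fix $\eta>0$. Since $\partial D$ is compact and $\mathcal{H}^{n-\delta}(\partial D)=0$, we cover $\partial D$ by finitely many balls $B(z_i,r_i)\subset G$ with $\sum_i r_i^{n-\delta}<\eta$. Put $U=\bigcup_i B(z_i,r_i)$ and $\tilde{D}=D\cup U$; then $\overline{D}\subset\tilde{D}\subset\subset G$, and because $\partial D\subset U$ lies in the interior of $\tilde{D}$, the boundary $\partial\tilde{D}\subset\partial U$ is a finite union of spherical caps with $\mathcal{H}^{n-1}(\partial\tilde{D})<\infty$. Splitting $\tilde{D}=D\cup(U\setminus D)$, using $G\setminus\tilde{D}\subset\mathbb{R}^n\setminus B(z_i,r_i)$ for each $i$, and applying the scaling identity $P_\delta(B(z,r),\mathbb{R}^n)=c(n,\delta)\,r^{n-\delta}$ after a disjointification of the balls gives
\[
P_\delta(\tilde{D},G)\le P_\delta(D,G)+P_\delta(U,\mathbb{R}^n)\le P_\delta(D,G)+c(n,\delta)\,\eta.
\]

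For the second step, let $v_\epsilon(x)=\max\{0,\,1-\dist(x,\overline{\tilde{D}})/\epsilon\}$. For small $\epsilon>0$, $v_\epsilon\in C_0(G)$ and $v_\epsilon\equiv 1$ on $\overline{\tilde{D}}\supset\overline{D}$, so $v_\epsilon$ is admissible for $\mathrm{cap}_{\delta,1}(\overline{D},G)$. With $\tilde{D}^{(s)}=\{x:\dist(x,\overline{\tilde{D}})<s\}$, the superlevel sets are $\{v_\epsilon>t\}=\tilde{D}^{((1-t)\epsilon)}$, and a layer-cake argument yields
\[
|v_\epsilon|_{W^{\delta,1}(G)}=\frac{2}{\epsilon}\int_0^\epsilon P_\delta(\tilde{D}^{(s)},G)\,ds.
\]
Decompose $P_\delta(\tilde{D}^{(s)},G)-P_\delta(\tilde{D},G)=R_1(s)-R_2(s)$, where $R_2(s)$ is the interaction of $\tilde{D}$ with the shell $\tilde{D}^{(s)}\setminus\tilde{D}$ and $R_1(s)$ is the interaction of the shell with $G\setminus\tilde{D}^{(s)}$. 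Since $P_\delta(\tilde{D},G)<\infty$, dominated convergence gives $R_2(s)\to 0$ as $s\to 0^+$. For $R_1(s)$, the $1$-Lipschitz property of $\dist(\cdot,\overline{\tilde{D}})$ yields $|x-y|\ge s-\dist(x,\overline{\tilde{D}})$ for admissible $(x,y)$, so the inner $y$-integral is controlled by $C(s-\dist(x,\overline{\tilde{D}}))^{-\delta}$; combined with the tube estimate $|\tilde{D}^{(t)}\setminus\tilde{D}|\le Ct$, valid because $\partial\tilde{D}$ is $(n-1)$-rectifiable with finite $\mathcal{H}^{n-1}$, one gets $R_1(s)=O(s^{1-\delta})$. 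After averaging in $s$, $|v_\epsilon|_{W^{\delta,1}(G)}\to 2 P_\delta(\tilde{D},G)$, so
\[
\mathrm{cap}_{\delta,1}(\overline{D},G)\le\lim_{\epsilon\to 0^+}|v_\epsilon|_{W^{\delta,1}(G)}=2P_\delta(\tilde{D},G)\le 2P_\delta(D,G)+2c(n,\delta)\,\eta,
\]
and letting $\eta\to 0^+$ proves \eqref{frac_cap_per}.

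The main obstacle is the $R_1$ bound: since the fractional perimeter is not monotone under set inclusion, the perimeter of thin shells around irregular boundaries can blow up, which is why the distance-truncation test function cannot be applied directly to $\overline{D}$. The thickening in the first step replaces $D$ by a set $\tilde{D}$ with piecewise smooth boundary, where the linear tube volume bound makes the shell argument tractable; the hypothesis $\mathcal{H}^{n-\delta}(\partial D)=0$ enters only through the ball covering, ensuring the perimeter cost $P_\delta(\tilde{D},G)-P_\delta(D,G)$ stays bounded by $c(n,\delta)\,\eta$.
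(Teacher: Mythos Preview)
Your approach is correct but takes a genuinely different route from the paper. The paper builds a single test function $u=\min\{1,\sum_{B\in\mathcal{F}}\psi^B\}$, where $\mathcal{F}$ consists of scaled bump functions over balls covering $\overline{D}$: small balls $B_i$ covering $\partial D$ (chosen via $\mathcal{H}^{n-\delta}(\partial D)=0$) together with balls $B(z,\tfrac{1}{3}\dist(z,\partial D))$ for $z\in D$. The crucial observation is that the interior balls and their doubles sit inside $D$, so they contribute nothing to the integral over $(G\setminus D)\times(G\setminus D)$; only the boundary balls $B_i$ contribute, and each gives at most $r_{B_i}^{n-\delta}\,\lvert\psi\rvert_{W^{\delta,1}(\R^n)}$ by scaling. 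Since $u\equiv 1$ on $D$, the $D\times(G\setminus D)$ piece is already bounded by $2P_\delta(D,G)$, and the proof finishes in one stroke.

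You instead thicken $D$ to $\tilde D=D\cup U$ and then run a distance-truncation/coarea argument on $\tilde D$. This works, but is heavier: it replaces a direct seminorm estimate by an asymptotic analysis of $P_\delta(\tilde D^{(s)},G)$ as $s\to 0$. One point deserves more care. The bound $R_1(s)=O(s^{1-\delta})$ does not follow from the linear tube estimate $\lvert\tilde D^{(t)}\setminus\tilde D\rvert\le Ct$ alone; integrating $(s-d(x))^{-\delta}$ over $\{0<d<s\}$ requires the Lipschitz bound $\lvert\{s-r<d<s\}\rvert\le Cr$ (equivalently, $\mathcal{H}^{n-1}(\{d=t\})$ bounded near $t=0$), which is strictly stronger. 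It does hold here, for the following reason: since $\partial D\subset U$, for $x\notin\overline{\tilde D}$ one has $\dist(x,\overline{\tilde D})=\dist(x,\overline{U})=\min_i(\lvert x-z_i\rvert-r_i)_+$, so $\{s-r<d<s\}\subset\bigcup_i\bigl(B(z_i,r_i+s)\setminus B(z_i,r_i+s-r)\bigr)$, whose measure is $\le C(\tilde D)\,r$. With this correction your plan goes through. The paper's argument avoids this shell analysis entirely by working with bump functions rather than the distance function.
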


For the proof of Theorem \ref{l.regular} we need an auxiliary result.

\begin{lem}\label{l.regular_est}
Suppose that $G$ is an open set in $\R^n$ and let $0<\delta<1$ be given.
Let $D\subset\subset G$ be an open set such that $\mathcal{H}^{n-\delta}(\partial D)=0$.
Let $\varepsilon>0$ be given.
Then, there exists a function $u$ in $C_0(G)$
such that $0\le u\le 1$ and $u(x)= 1$ for every  $x\in \overline{D}$.
Moreover,
\begin{equation}\label{e.limit}
 \int_{G\setminus D}\int_{G\setminus D}\frac{\lvert u(x)-u(y)\rvert}{\lvert x-y\rvert^{n+\delta}}\,dy\,dx <\varepsilon\,.
\end{equation}
\end{lem}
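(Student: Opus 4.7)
The plan is to exploit the hypothesis $\mathcal{H}^{n-\delta}(\partial D)=0$ by localizing the transition of $u$ from $1$ on $\overline D$ to $0$ inside a union of balls covering $\partial D$ of arbitrarily small $(n-\delta)$-content. Fix an auxiliary parameter $\eta>0$, to be specified later in terms of $\varepsilon$. Since $\partial D\subset\overline D$ is compact and $\mathcal{H}^{n-\delta}(\partial D)=0$, the definition of Hausdorff measure yields a finite cover of $\partial D$ by open balls $\{B^n(x_i,r_i)\}_{i=1}^N$ with $\sum_{i=1}^N r_i^{n-\delta}<\eta$ and with radii small enough that $\overline{\bigcup_i B^n(x_i,2r_i)}\subset G$.

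For each $i$, choose a Lipschitz cutoff $\phi_i\colon\R^n\to[0,1]$ with $\phi_i\equiv 1$ on $B^n(x_i,r_i)$, $\phi_i\equiv 0$ off $B^n(x_i,2r_i)$, and Lipschitz constant at most $C(n)/r_i$. Define
\[
u(x) = \max\bigl(\chi_{\overline D}(x),\, \max_{1\le i\le N}\phi_i(x)\bigr),\qquad x\in G.
\]
Clearly $0\le u\le 1$, $u\equiv 1$ on $\overline D$, and the support of $u$ is contained in the compact set $\overline D\cup\bigcup_i\overline{B^n(x_i,2r_i)}\subset G$. Continuity is immediate on the open sets $D$ and $G\setminus\overline D$; at a point $x_0\in\partial D$ one has $x_0\in V:=\bigcup_i B^n(x_i,r_i)$, which is an open neighborhood of $x_0$ on which $\max_i\phi_i\equiv 1$, whence $u\equiv 1$ near $x_0$. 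Hence $u\in C_0(G)$.

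To verify \eqref{e.limit}, note that $\partial D\subset V$ implies $\chi_{\overline D}=\chi_{\partial D}\le\max_i\phi_i$ on $G\setminus D$, so $u=\max_i\phi_i$ there. Using $\lvert\max_i\phi_i(x)-\max_i\phi_i(y)\rvert\le\sum_i\lvert\phi_i(x)-\phi_i(y)\rvert$ and enlarging the domain of integration to $\R^n\times\R^n$, the left-hand side of \eqref{e.limit} is majorized by $\sum_i\lvert\phi_i\rvert_{W^{\delta,1}(\R^n)}$. A standard scaling estimate (split at $\lvert x-y\rvert=r_i$, invoking the Lipschitz bound for $\lvert x-y\rvert\le r_i$ together with $1-\delta>0$ to control the near-diagonal part, and using $\lvert\phi_i\rvert\le\chi_{B^n(x_i,2r_i)}$ for $\lvert x-y\rvert>r_i$) yields $\lvert\phi_i\rvert_{W^{\delta,1}(\R^n)}\le C(n,\delta)\,r_i^{n-\delta}$. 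Summing and recalling $\sum_i r_i^{n-\delta}<\eta$ produces an upper bound $C(n,\delta)\,\eta$, and choosing $\eta<\varepsilon/C(n,\delta)$ finishes the proof. The main technical point is the single-ball scaling estimate, whose exponent $n-\delta$ matches the Hausdorff-measure hypothesis precisely; this matching is the reason the lemma holds at this sharp exponent.
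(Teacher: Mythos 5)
Your proof is correct and follows essentially the same strategy as the paper: cover $\partial D$ by finitely many balls of total $(n-\delta)$-content less than $\eta$ (using $\mathcal{H}^{n-\delta}(\partial D)=0$ and compactness), build a cutoff equal to $1$ on a neighborhood of $\overline D$ whose transition off $D$ lives only in those balls, and use the scaling bound $\lvert\phi_i\rvert_{W^{\delta,1}(\R^n)}\le C(n,\delta)r_i^{n-\delta}$. The only differences are cosmetic: the paper glues a rescaled smooth bump over each covering ball together with interior balls and takes $\min\{1,\sum\psi^B\}$, while you take a maximum with $\chi_{\overline D}$ and Lipschitz cutoffs, which works just as well since only continuity is required.
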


\begin{proof}
We may assume that $G\not=\R^n$. If $G=\R^n$, then we just remove one point from $G\setminus \overline{D}$.
Let us fix a non-negative $\psi\in C^\infty_0(B^n(0,2))$ with $\psi(x)=1$ for every $x\in B^n(0,1)$.
For a given ball $B=B^n(x_B,r_B)$ and $x\in\R^n$â we write $\psi^B(x)=\psi( (x-x_B)/r_B)$. We note
that $\psi^B(x)=1$ for every $x\in B$ and $\psi^B(x)=0$ if $x\in \R^n\setminus 2B$.
By a change of variables, we find that
\begin{equation}\label{e.small}
\lvert \psi^B \rvert_{W^{\delta,1}(\R^n)} = r_B^{n-\delta}Ê\lvert \psi\rvert_{W^{\delta,1}(\R^n)}<\infty\,.
\end{equation}
Let us fix $\varepsilon\in (0,\dist(\partial D,\partial G)/6)$. Because $\mathcal{H}^{n-\delta}(\partial D)=0$, there
are sets $E_1,E_2,\ldots$ in $\R^n$ such that 
\[
\partial D\subset \bigcup_{i} E_i\quad \text{ andÊ} \quad \sum_{i} \diam(E_i)^{n-\delta} < \min\{ \varepsilon,
(\dist(\partial D, \partial G)/6)^{n-\delta}\}\,.
\]
We refer to \cite[Lemma 4.6]{Mat95}.

Let $i\in \N$.
Without loss of generality, we may assume that there is a point $x_i\in E_i\cap \partial D$.
We write $B_i = B^n(x_i,2\diam(E_i)+2^{-i}\varepsilon)$. Hence, $E_i\subset B_i$.
Then the family
\[
\mathcal{C} = \{B_i\,:\, i=1,2,\ldots\}Ê\cup \{B^n(z,\dist(z,\partial D)/3)\,:\, z\in D\}
\]
is a covering of $\overline{D}$ with open balls. By compactness of $\overline{D}$, there is a finite subfamily
$\mathcal{F}\subset \mathcal{C}$ such that
$\overline{D}\subset \cup_{B\in\mathcal{F}} B$. We define $u=\min\{1,g\}$, where
\[
g = \sum_{B\in\mathcal{F}}  \psi^B\,.
\]
Now $u\in C_0(G)$ and $0\le u \le 1$. Also, $u(x)=1$ for every $x\in\overline{D}$. Namely, 
if $x\in\overline{D}$, there exists $B_x\in\mathcal{F}$ such that $x\in B_x$ and so $g(x)\ge \psi^{B_x}(x)=1$.
Hence, $u(x)=1$. 

By definition of the function $u$ 
\begin{align*}
\int_{G\setminus D}\int_{G\setminus D} \frac{\lvert u(x)-u(y)\rvert}{\lvert x-y\rvert^{n+\delta}}\,dy\,dx
&\le \int_{G\setminus D}\int_{G\setminus D} \frac{\lvert g(x)-g(y)\rvert}{\lvert x-y\rvert^{n+\delta}}\,dy\,dx\\
&\le \sum_{B\in\mathcal{F}}\int_{G\setminus D}\int_{G\setminus D} \frac{\lvert \psi^B(x)-\psi^B(y)\rvert}{\lvert x-y\rvert^{n+\delta}}\,dy\,dx\,.
\end{align*}
We note that $\psi^B(x)-\psi^B(y)=0$ if $x,y\in G\setminus D$ and $B=B^n(z,\dist(z,\partial D)/3)$ for some
$z\in D$. Hence, by estimates in \eqref{e.small}
\begin{align*}
\int_{G\setminus D}\int_{G\setminus D} \frac{\lvert u(x)-u(y)\rvert}{\lvert x-y\rvert^{n+\delta}}\,dy\,dx&\le \sum_{i=1}^\infty\int_{\R^n}\int_{\R^n} \frac{\lvert \psi^{B_i}(x)-\psi^{B_i}(y)\rvert}{\lvert x-y\rvert^{n+\delta}}\,dy\,dx\\
&\le \lvert \psi\rvert_{W^{\delta,1}(\R^n)}\sum_{i=1}^\infty (2\diam(E_i)+2^{-i}\varepsilon)^{n-\delta}\\&\le C(\psi,n,\delta)(\varepsilon + \varepsilon^{n-\delta})\,.
\end{align*}
The lemma is proved.
\end{proof}

We are ready to complete the proof of Theorem \ref{l.regular}.

\begin{proof}[Proof of Theorem \ref{l.regular}]
Suppose that $D\subset\subset G$ is an open set and 
Ê$\mathcal{H}^{n-\delta}(\partial D)=0$. 
Let $\varepsilon>0$ and let $u=u_\varepsilon$
be the $C_0(G)$ function given by Lemma \ref{l.regular_est}.
Then, we obtain
\begin{align*}
\mathrm{cap}_{\delta,1}(\overline{D},G)&\le \int_G \int_G \frac{\lvert u(x)-u(y)\rvert}{\lvert x-y\rvert^{n+\delta}}\,dy\,dx \\
&\le 2\int_D \int_{G\setminus D}Ê\frac{1}{\lvert x-y\rvert^{n+\delta}}\,dy\,dx + \int_{G\setminus D}\int_{G\setminus D}\frac{\lvert u(x)-u(y)\rvert}{\lvert x-y\rvert^{n+\delta}}\,dy\,dx \\
&< Ê2P_\delta(D,G) + \varepsilon\,.
\end{align*}
The theorem is proved by taking $\varepsilon\to 0$. 
\end{proof}

\begin{cor}\label{c.regular}
Suppose that  $u\in C^\infty_0(G)$.
Let $0<\delta <1$ be given.
Then the set \[D_t:=\{x\in G\,:\, u(x)> t\}\] satisfies inequality \eqref{frac_cap_per} with respect to $G$  for almost every $t>0$.
\end{cor}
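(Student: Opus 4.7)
The plan is to reduce Corollary~\ref{c.regular} to Theorem~\ref{l.regular}. To apply that theorem to $D_t$, I need to verify, for almost every $t > 0$, three properties: $D_t$ is open, $D_t \subset\subset G$, and $\mathcal{H}^{n-\delta}(\partial D_t) = 0$.

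The first two properties hold for every $t > 0$ and require no smallness of $t$. Since $u \in C^\infty_0(G)$ is continuous, the superlevel set $D_t$ is automatically open. Moreover, because $u$ has compact support in $G$, for $t > 0$ we have $D_t \subset \mathrm{supp}(u)$, hence $\overline{D_t} \subset \mathrm{supp}(u) \subset\subset G$.

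The remaining vanishing Hausdorff measure condition is where the almost every $t > 0$ enters. First I would observe that $\partial D_t \subset \{x \in G : u(x) = t\}$: by continuity of $u$, a boundary point of $D_t$ cannot satisfy $u(x) > t$ (interior point) nor $u(x) < t$ (exterior point), so the level equality holds. Lemma~\ref{l.sard_type} then provides that $\mathcal{H}^{n-1}(\{u = t\}) < \infty$ for almost every $t > 0$, and therefore $\mathcal{H}^{n-1}(\partial D_t) < \infty$ for almost every $t > 0$.

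Finally, I would invoke Remark~\ref{r.comparion} with $\rho = 1$, which is legitimate because $\delta \in (0,1)$ gives $\delta < 1 \le n$, using $n \ge 2$. The remark upgrades the finiteness $\mathcal{H}^{n-1}(\partial D_t) < \infty$ to $\mathcal{H}^{n-\delta}(\partial D_t) = 0$ for almost every $t > 0$. Applying Theorem~\ref{l.regular} to $D_t$ yields the claim. I do not anticipate a serious obstacle; the only mild subtlety is the elementary verification that $\partial D_t$ sits inside the level set $\{u = t\}$, which follows immediately from continuity.
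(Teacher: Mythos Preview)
Your proposal is correct and follows essentially the same route as the paper's proof: verify $D_t\subset\subset G$ and $\partial D_t\subset\{u=t\}$, apply Lemma~\ref{l.sard_type} to get $\mathcal{H}^{n-1}(\partial D_t)<\infty$ for almost every $t>0$, upgrade via Remark~\ref{r.comparion} to $\mathcal{H}^{n-\delta}(\partial D_t)=0$, and conclude by Theorem~\ref{l.regular}. The only difference is that you spell out a few details (openness of $D_t$, the continuity argument for $\partial D_t\subset\{u=t\}$) that the paper leaves implicit.
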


\begin{proof}
We note that $D_t\subset\subset G$ and
$\partial D_t \subset  \{x\in G\,:\,u(x)= t\}$
for every $t>0$.
Hence, by Lemma \ref{l.sard_type}, 
$\mathcal{H}^{n-1}(\partial D_t)<\infty$ for almost every $t>0$.
 By Remark \ref{r.comparion}, $ \mathcal{H}^{n-\delta}(\partial D_t)=0$ for almost every $t>0$.
 Thus,
 the claim follows from Theorem \ref{l.regular}.
\end{proof}

All quasiballs satisfy inequality \eqref{frac_cap_per} with respect to $\R^n$. 

\begin{example}\label{e.ex}
If
$f:\R^n\to \R^n$ is a $K$-quasiconformal mapping \cite[\S 3]{Astala}, then $D:=f(B^n(0,1))$ is called a quasiball.
We prove  that $D$ satisfies \eqref{frac_cap_per} with respect to $\R^n$ for every $0<\delta<1$. Let us 
write 
\[r^*=\inf\big\{ r>0\,:\, D\subset \partial D+B^n(0,r)\big\}<\infty\,.\] Then, by \cite[Theorem 1.3]{Faraco} there is a constant $C=C(n,\delta,r^*,K)>0$ such that
\begin{equation}\label{e.lasku}
\int_0^{r^*} \lvert \partial D + B^n(0,r)\rvert\, \frac{dr}{r^{1+\delta}} \le C \big\{\lvert D\rvert +\lvert \chi_D\rvert_{W^{\delta,1}(\R^n)}\big\}\,.
\end{equation}
In particular, if $\mathcal{M}^{n-\delta}_\ast(\partial D)>0$, then $2\,P_\delta(D,\R^n) = \lvert \chi_D\rvert_{W^{\delta,1}(\R^n)}=\infty$\,.
Hence inequality
\begin{equation}\label{e.quasi}
\mathrm{cap}_{\delta,1}(\overline{D},\R^n) \le 2\,P_\delta(D,\R^n)
\end{equation}
holds. On the other hand, if  $\mathcal{M}^{n-\delta}_\ast(\partial D)=0$, then
$\mathcal{H}^{n-\delta}(\partial D)=0$ and 
 inequality \eqref{e.quasi} holds by Theorem \ref{l.regular}. Thus,
$D$ satisfies inequality \eqref{frac_cap_per} with respect to $\R^n$.\qed
\end{example}


\section{The main result}\label{m_proof}

Theorem \ref{t.equiv} is a consequence of the following, more general, result.

\begin{thm}\label{t.equiv_II}
Suppose that  $G$ is an open set in $\R^n$. Let
$1\le q<\infty$, $\delta\in (0,1)$, and a constant $C>0$ be given. Then the following conditions are equivalent.
\begin{itemize}
\item[(A)]  The fractional inequality
\[
\bigg(\int_G \lvert u(x)\rvert^{q}\,dx\bigg)^{1/q} \le C 
\int_G\int_G\frac{\lvert u(x)-u(y)\rvert}{\lvert x-y\rvert^{n+\delta}}\,
dy\,dx
\]
holds for all measurable functions $u:G\to \R$ with compact support in $G$.
\item[(B)] The fractional isocapacitary inequality
\[
\lvert K\rvert^{1/q} \le C\,\mathrm{cap}_{\delta,1}(K,G)
\]
holds for every compact set $K$ in $G$.
\item[(C)] 
The fractional isoperimetric inequality
\[
\lvert D\rvert^{1/q} \le 2C\,P_\delta(D,G)
\]
holds for every  open set $D\subset\subset G$ whose boundary $\partial D$  is an $(n-1)$-dimensional
$C^\infty$-manifold in $G$. 
\end{itemize}
\end{thm}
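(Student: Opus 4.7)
The plan is to prove the equivalences cyclically, $(A)\Rightarrow(B)\Rightarrow(C)\Rightarrow(A)$, keeping sharp track of the constants so that the same $C$ appears throughout (with the factor $2$ in $(C)$ coming from the relation $|\chi_A|_{W^{\delta,1}(G)}=2P_\delta(A,G)$).

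For $(A)\Rightarrow(B)$, fix a compact set $K\subset G$ and let $u\in C_0(G)$ be admissible for the capacity, i.e.\ $u\ge 1$ on $K$. Then $|K|^{1/q}\le\bigl(\int_G|u|^q\,dx\bigr)^{1/q}\le C\,|u|_{W^{\delta,1}(G)}$ by $(A)$, and taking the infimum over admissible $u$ yields $(B)$ with the same constant. For $(B)\Rightarrow(C)$, let $D\subset\subset G$ be open with $\partial D$ a smooth $(n-1)$-manifold in $G$. Then $\partial D\subset\overline D$ is compact, so Lemma~\ref{l.sard_type} gives $\mathcal{H}^{n-1}(\partial D)<\infty$, and Remark~\ref{r.comparion} applied with $\rho=1$ (legitimate since $\delta<1$) gives $\mathcal{H}^{n-\delta}(\partial D)=0$; in particular $|\partial D|=0$, hence $|\overline D|=|D|$. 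Theorem~\ref{l.regular} supplies $\mathrm{cap}_{\delta,1}(\overline D,G)\le 2P_\delta(D,G)$, and combining with $(B)$ applied to $K=\overline D$ produces $|D|^{1/q}=|\overline D|^{1/q}\le C\,\mathrm{cap}_{\delta,1}(\overline D,G)\le 2C\,P_\delta(D,G)$.

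The main implication is $(C)\Rightarrow(A)$. I would first establish it for $u\in C_0^\infty(G)$ with $u\ge 0$. Writing $D_t=\{u>t\}$, the layer-cake identity $u(x)=\int_0^\infty\chi_{D_t}(x)\,dt$ together with Minkowski's integral inequality yields
\[
\Bigl(\int_G u(x)^q\,dx\Bigr)^{1/q}\le\int_0^\infty |D_t|^{1/q}\,dt,
\]
while nonnegativity of $u$ gives $|u(x)-u(y)|=\int_0^\infty|\chi_{D_t}(x)-\chi_{D_t}(y)|\,dt$, so Fubini produces
\[
|u|_{W^{\delta,1}(G)}=\int_0^\infty|\chi_{D_t}|_{W^{\delta,1}(G)}\,dt=\int_0^\infty 2\,P_\delta(D_t,G)\,dt.
\]
By Sard's theorem combined with the implicit function theorem, as in the proof of Lemma~\ref{l.sard_type}, for almost every $t>0$ the set $D_t$ is open in $G$ with $D_t\subset\subset G$ (since $u$ has compact support) and $\partial D_t\subset\{u=t\}$ is a compact smooth $(n-1)$-manifold in $G$. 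Condition $(C)$ therefore applies to $D_t$ for a.e.\ $t>0$ and gives $|D_t|^{1/q}\le 2C\,P_\delta(D_t,G)$; integrating in $t$ and chaining the three displays yields $(A)$ for this class of $u$.

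The step I expect to be most delicate is the extension from nonnegative $u\in C_0^\infty(G)$ to arbitrary measurable $u$ with compact support in $G$. Without loss of generality the right-hand side of $(A)$ is finite, so $u\in\dot W^{\delta,1}(G)$; by Lemma~\ref{l.integrability} and the compact support, $u\in L^1(G)$. Replacing $u$ by $|u|$ only decreases the right-hand side of $(A)$ (since $\big||u(x)|-|u(y)|\big|\le|u(x)-u(y)|$), so it suffices to treat nonnegative $u$. I would then regularize by convolution with a standard mollifier $\phi_\varepsilon$: for all sufficiently small $\varepsilon$ the convolution $u_\varepsilon$ lies in $C_0^\infty(G)$ and is nonnegative, $u_\varepsilon\to u$ a.e.\ and in $L^q$, and the Gagliardo seminorms converge, $|u_\varepsilon|_{W^{\delta,1}(G)}\to|u|_{W^{\delta,1}(G)}$. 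Applying $(A)$ to each $u_\varepsilon$ and passing to the limit, using Fatou's lemma on the left-hand side, closes the implication.
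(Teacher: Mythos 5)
Your proposal is correct and follows essentially the same route as the paper: $(A)\Rightarrow(B)$ by testing with capacity-admissible functions, $(B)\Rightarrow(C)$ via Lemma~\ref{l.sard_type}, Remark~\ref{r.comparion} and Theorem~\ref{l.regular}, and $(C)\Rightarrow(A)$ by the layer-cake/Minkowski estimate, Sard's theorem with the implicit function theorem for almost every level set, the fractional coarea formula (Lemma~\ref{l.coarea}), and mollification. Three remarks. First, the one assertion carrying real analytic weight is the convergence of the Gagliardo seminorms, $\lvert u_\varepsilon\rvert_{W^{\delta,1}(G)}\to\big\lvert\,\lvert u\rvert\,\big\rvert_{W^{\delta,1}(G)}$: for a general open set $G$ this is not the routine Minkowski-inequality bound, which only yields $\limsup_\varepsilon\lvert u_\varepsilon\rvert_{W^{\delta,1}(G)}\le\lvert u\rvert_{W^{\delta,1}(\R^n)}$ with the seminorm taken over all of $\R^n$ and hence does not give the constant $C$ with the $G$-seminorm; it is precisely Lemma~\ref{l.approx} of the paper (proved by splitting near and far from the diagonal and using $u\in L^1$ via Lemma~\ref{l.integrability}), so you should cite it or reproduce its proof rather than state it as known. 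Second, to apply $(C)$ you need $\partial D_t$ to \emph{equal} the level set $\{u_\varepsilon=t\}$, not merely be contained in it, in order for it to be a manifold; at regular values this equality does hold because $\nabla u_\varepsilon\neq 0$ on the level set, which is how the paper argues. Third, your use of Fatou's lemma on the left-hand side in place of the paper's $1$-Lipschitz truncation, monotone convergence, and $L^q$-convergence of mollifications is fine and slightly leaner, though your claim that $u_\varepsilon\to u$ in $L^q$ is both unjustified a priori (you do not yet know $u\in L^q$) and unnecessary, since almost everywhere convergence suffices for Fatou.
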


We first give an immediate consequence of condition (A) in Theorem \ref{t.equiv_II}.

\begin{rem}
Let  $G$ be  an open set in $\R^n$. If $q\in [1, \infty )$ and $\delta\in (0,1)$ are given such that condition (A) in Theorem \ref{t.equiv_II} holds with a constant $C>0$, then
the inequality
\begin{equation}\label{remark_1}
\lvert A\rvert^{1/q} \le 2C\,P_\delta(A,G)
\end{equation}
holds for every measurable set $A\subset\subset G$.
This follows from condition (A) when $u=\chi_A$.
\end{rem}

For the proof of Theorem \ref{t.equiv_II} we need some auxiliary results.
First we recall an extension of the classical coarea formula
\begin{equation}\label{e.coarea_classical}
\int_{\R^n} \lvert \nabla u(x)\rvert\,dx = \int_{-\infty}^\infty \mathcal{H}^{n-1}(\{u = t\})\,dt\,,
\end{equation}
which is valid for every real-valued Lipschitz function $u$ on $\R^n$, we refer to \cite[\S 3.2]{Fed}.
The following fractional coarea formula is from \cite[Lemma 10]{APM}.

\begin{lem}\label{l.coarea}
Suppose that $G$ is an open set in $\R^n$. Let $0<\delta<1$ be given.
Then
\begin{equation}\label{e.coarea}
\frac{1}{2} \lvert u\rvert_{W^{\delta,1}(G)} = \int_0^\infty P_{\delta} (\{u> t\},G)\,dt
\end{equation}
for every $u:G\to [0,\infty)$ with $u\in \dot W^{\delta,1}(G)$.
\end{lem}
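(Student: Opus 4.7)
The plan is to derive the fractional coarea identity from the elementary one-dimensional layer-cake formula
\[
 \lvert a - b\rvert = \int_0^\infty \lvert \chi_{(t,\infty)}(a) - \chi_{(t,\infty)}(b)\rvert\,dt, \qquad a,b\geq 0,
\]
and then apply Tonelli's theorem. To verify the scalar identity I would simply observe that the integrand on the right is the characteristic function of the interval $[\min(a,b),\max(a,b))$, so its integral over $(0,\infty)$ collapses to $\max(a,b) - \min(a,b) = \lvert a - b\rvert$.

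With this pointwise identity in hand, I would substitute $a = u(x)$ and $b = u(y)$ inside the integrand defining $\lvert u \rvert_{W^{\delta,1}(G)}$, noting that $\chi_{(t,\infty)}(u(\cdot)) = \chi_{\{u>t\}}(\cdot)$. This produces a triple integral over $G\times G\times (0,\infty)$ of a non-negative function. By Tonelli's theorem I pull the $t$-integration outside, and the remaining inner double integral over $G\times G$ is precisely $\lvert \chi_{\{u>t\}}\rvert_{W^{\delta,1}(G)}$. Invoking the identity $P_\delta(A,G) = \tfrac{1}{2}\lvert \chi_A\rvert_{W^{\delta,1}(G)}$ already recorded in Section \ref{notation} and dividing through by $2$ then yields \eqref{e.coarea}.

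The only subtlety I anticipate is measurability: since $u$ is measurable, the map $(x,t)\mapsto \chi_{\{u>t\}}(x) = \chi_{\{(x,t)\,:\,u(x)>t\}}(x,t)$ is jointly measurable on $G\times (0,\infty)$, so Tonelli applies without issue. The hypothesis $u\in \dot W^{\delta,1}(G)$ is not in fact used to establish the equality itself, which holds as an identity of extended non-negative reals; it merely guarantees that both sides are simultaneously finite. In short, I expect no genuine obstacle beyond careful bookkeeping of the Tonelli interchange, and the argument works uniformly in $\delta\in (0,1)$.
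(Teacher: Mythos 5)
Your proposal is correct and follows essentially the same route as the paper: the layer-cake identity $\lvert u(x)-u(y)\rvert=\int_0^\infty\lvert\chi_{\{u>t\}}(x)-\chi_{\{u>t\}}(y)\rvert\,dt$ followed by a Fubini--Tonelli interchange, the only cosmetic difference being that the paper extracts the factor $2$ by expanding $\lvert\chi_{\{u>t\}}(x)-\chi_{\{u>t\}}(y)\rvert$ as a sum of two cross terms, whereas you invoke the recorded identity $P_\delta(A,G)=\tfrac12\lvert\chi_A\rvert_{W^{\delta,1}(G)}$. Your remarks on joint measurability and on the hypothesis $u\in\dot W^{\delta,1}(G)$ being needed only for finiteness are accurate.
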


\begin{proof}
We note that
\[
\lvert u(x)-u(y)\rvert = \int_0^\infty  \lvert \chi_{\{u>t\}}(x)-\chi_{\{u>t\}}(y)\rvert\,dt
\]
for every $x,y\in G$.
On the other hand,
\[
\lvert \chi_{\{u>t\}}(x)-\chi_{\{u>t\}}(y)\rvert = \chi_{\{u>t\}}(x) \chi_{G\setminus \{u>t\}}(y)
+\chi_{\{u>t\}}(y) \chi_{G\setminus \{u>t\}}(x)\,.
\]
Hence, by Fubini's theorem
\begin{align*}
\lvert u\rvert_{W^{\delta,1}(G)} &= \int_G \int_G \int_0^\infty \frac{\lvert \chi_{\{u>t\}}(x)-\chi_{\{u>t\}}(y)\rvert}{\lvert x-y\rvert^{n+\delta}}\,dt\,dy\,dx\\
&=2 \int_0^\infty \int_{\{u>t\}}\int_{G\setminus \{u>t\}} \frac{1}{\lvert x-y\rvert^{n+\delta  }}\,dx\,dy\,dt
=2\int_0^\infty P_\delta(\{u>t\},G)\,dt\,.
\end{align*}
\end{proof}

We prove an approximation lemma. Let $\varphi\in C^\infty_0(B^n(0,1))$ be a non-negative bump
function such that
\[
\int_{\R^n}Ê\varphi(x)\,dx = 1\,.
\]
For $j\in\mathbf{N}$ and $x\in\R^n$, we write $\varphi_j(x)=2^{jn}\varphi(j x)$.
If $u\in L^p(\R^n)$ and $1\le p<\infty$,
it is well known that $u\ast \varphi_j\to u$ in $L^p(\R^n)$ when $j\to\infty$.
We use this fact in the proof of the following lemma
which tells that
the standard mollification converges to $u$ in the fractional seminorm
$\lvert \cdot \rvert_{W^{\delta,1}(G)}$.

\begin{lem}\label{l.approx}
Suppose that $G$ is an open set in $\R^n$.
Let $0<\delta<1$ be given.
Let
$u:G\to \R$ be a function in $\dot W^{\delta,1}(G)$ with
compact support in $G$. Then, 
\[
\lvert u-u\ast \varphi_j\rvert_{W^{\delta,1}(G)} \xrightarrow{j\to\infty}Ê0\,.
\]
\end{lem}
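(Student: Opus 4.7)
The plan is to reduce the problem to the standard fact that translation is continuous on $L^1$. As a preliminary step, I would extend $u$ by zero outside $G$ to obtain a function $\tilde u:\R^n\to\R$, and verify that $\tilde u \in \dot W^{\delta,1}(\R^n)$. Writing $K$ for the compact support of $u$ in $G$ and $d=\dist(K,\partial G)>0$, the pairs $(x,y)\in (\R^n\setminus G)^2$ contribute nothing to $\lvert \tilde u\rvert_{W^{\delta,1}(\R^n)}$, while the cross term
\[
2\int_K \int_{\R^n\setminus G}\frac{\lvert u(x)\rvert}{\lvert x-y\rvert^{n+\delta}}\,dy\,dx
\]
is bounded by a multiple of $\lVert u\rVert_{L^1(K)}$ since $\lvert x-y\rvert\ge d$ there, and Lemma \ref{l.integrability} guarantees $u\in L^1(K)$. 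Because $u=\tilde u$ and $u\ast \varphi_j=\tilde u\ast \varphi_j$ on $G$, one has
\[
\lvert u-u\ast\varphi_j\rvert_{W^{\delta,1}(G)} \le \lvert \tilde u -\tilde u\ast\varphi_j\rvert_{W^{\delta,1}(\R^n)},
\]
so it suffices to prove the analogous convergence on all of $\R^n$.

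For the reduced statement I would write $(\tilde u-\tilde u\ast\varphi_j)(x)=\int \varphi_j(z)(\tilde u(x)-\tilde u(x-z))\,dz$, form the double difference, and apply the triangle inequality together with Fubini's theorem to obtain
\[
\lvert \tilde u-\tilde u\ast\varphi_j\rvert_{W^{\delta,1}(\R^n)} \le \int_{\R^n} \varphi_j(z)\,\omega(z)\,dz,
\]
where
\[
\omega(z)=\int_{\R^n}\int_{\R^n}\frac{\lvert(\tilde u(x)-\tilde u(y))-(\tilde u(x-z)-\tilde u(y-z))\rvert}{\lvert x-y\rvert^{n+\delta}}\,dy\,dx.
\]
Setting $h(x,y):=(\tilde u(x)-\tilde u(y))/\lvert x-y\rvert^{n+\delta}$, the hypothesis $\tilde u\in \dot W^{\delta,1}(\R^n)$ means $h\in L^1(\R^{2n})$, and translation invariance of Lebesgue measure yields $\omega(z)=\lVert h- h(\cdot-z,\cdot-z)\rVert_{L^1(\R^{2n})}$. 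The classical continuity of translations on $L^1$ then gives $\omega(z)\to 0$ as $z\to 0$, with the uniform bound $\omega(z)\le 2\lvert \tilde u\rvert_{W^{\delta,1}(\R^n)}$. Since $\varphi_j$ is supported in $B^n(0,1/j)$ and $\int \varphi_j=1$, we conclude $\int \varphi_j\,\omega \le \sup_{\lvert z\rvert\le 1/j}\omega(z)\to 0$.

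The main obstacle I expect is the preparatory step: verifying that the zero-extension $\tilde u$ genuinely lies in $\dot W^{\delta,1}(\R^n)$, since a priori the seminorm only controls $u$ inside $G$. This is exactly where the compact support hypothesis (supplying the positive distance $d$) combines essentially with Lemma \ref{l.integrability} to tame the cross term. Once the reduction to $\R^n$ is in place, the remainder is the standard mollification argument applied to the $2n$-variable function $h$.
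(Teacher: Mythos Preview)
Your argument is correct and takes a genuinely different route from the paper. The paper never extends to $\R^n$; instead it splits the double integral over $G\times G$ into a near-diagonal part ($\lvert x-y\rvert<\rho$) and a far-diagonal part ($\lvert x-y\rvert\ge\rho$). The near-diagonal contribution of both $u$ and $u\ast\varphi_j$ is made uniformly small by choosing $\rho<\dist(K,\partial G)$ (a change of variables handles the convolution term), while the far-diagonal contribution tends to zero because $u\ast\varphi_j\to u$ in $L^1$. You instead pass to $\R^n$ via the zero extension, rewrite the seminorm as the $L^1(\R^{2n})$-norm of $h(x,y)=(\tilde u(x)-\tilde u(y))/\lvert x-y\rvert^{n+\delta}$, and invoke continuity of translations in $L^1$ along the diagonal $(z,z)$. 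Your route is cleaner and avoids the $\varepsilon$--$\rho$ bookkeeping; the paper's route stays inside $G$ until the very last step. Both arguments use compact support together with Lemma~\ref{l.integrability} at the same essential place: you to bound the cross term in the extension, the paper to obtain $u\in L^1(\R^n)$ for the far-diagonal piece.
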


\begin{proof}
Let us fix $\varepsilon>0$ and let $K$
denote the support of $u$. Then $K$ is a compact set in $G$ and therefore
$d=\dist(K,\partial G)>0$. We write
\begin{align*}
\lvert u-u\ast \varphi_j\rvert_{W^{\delta,1}(G)}
= \int_G \int_G \frac{\lvert u(x)-u\ast \varphi_j(x) -u(y)+u\ast \varphi_j(y)\rvert}{\lvert x-y\rvert^{n+\delta}}\,dy\,dx\,.
\end{align*}
Since $\lvert u\rvert_{W^{\delta,1}(G)}<\infty$, we may  apply the monotone convergence theorem in $\R^n\times \R^n$ in order to obtain a number
$\rho<d$ such that
\[
\int_G \int_{G\cap B^n(x,\rho)} \frac{\lvert u(x)-u(y)\rvert}{\lvert x-y\rvert^{n+\delta}}\,dy\,dx <\varepsilon\,.
\]
Now, for any $j\in\mathbf{N}$
\begin{align*}
\int_G \int_{G\cap B^n(x,\rho)} &\frac{\lvert u\ast \varphi_j(x)-u\ast \varphi_j(y)\rvert}{\lvert x-y\rvert^{n+\delta}}\,dy\,dx \\&\le \int_{\R^n}Ê\varphi_j(z) \int_G \int_{G\cap B^n(x,\rho)} \frac{\lvert u(x-z)-u(y-z)\rvert}{\lvert x-y\rvert^{n+\delta}}\,dy\,dx\,dz \\
&= \int_{\R^n}Ê\varphi_j(z) \int_{G-z} \int_{(G-z)\cap B^n(x,\rho)} \frac{\lvert u(x)-u(y)\rvert}{\lvert x-y\rvert^{n+\delta}}\,dy\,dx\,dz\,.
\end{align*}
Since $\rho<d=\dist(K,\partial G)=\dist(K,\R^n\setminus G)$, we obtain that
\begin{align*}
\int_G \int_{G\cap B^n(x,\rho)} \frac{\lvert u\ast \varphi_j(x)-u\ast \varphi_j(y)\rvert}{\lvert x-y\rvert^{n+\delta}}\,dy\,dx
\le \int_G\int_{G\cap B^n(x,\rho)}\frac{\lvert u(x)-u(y)\rvert}{\lvert x-y\rvert^{n+\delta}}\,dy\,dx< \varepsilon\,.
\end{align*}
Hence, 
\[
\int_G \int_{G\cap B^n(x,\rho)} \frac{\lvert u(x)-u\ast \varphi_j(x) -u(y)+u\ast \varphi_j(y)\rvert}{\lvert x-y\rvert^{n+\delta}}\,dy\,dx < 2\varepsilon\,.
\]
On the other hand, by Lemma \ref{l.integrability} we have that $u\in L^1(\R^n)$ and therefore
\begin{align*}
&\int_G \int_{G\setminus B^n(x,\rho)} \frac{\lvert u(x)-u\ast \varphi_j(x) -u(y)+u\ast \varphi_j(y)\rvert}{\lvert x-y\rvert^{n+\delta}}\,dy\,dx \\
&\qquad \le 2\bigg(\int_{\R^n\setminus B^n(0,\rho)} \lvert x\rvert^{-n-\delta}\,dx \bigg) \cdot\int_G \lvert u(x)-u\ast \varphi_j(x)\rvert\,dx\xrightarrow{j\to\infty} 0\,.
\end{align*}
The claim follows by combining these estimates.
\end{proof}

We are ready to prove Theorem \ref{t.equiv_II}.

\begin{proof}[Proof of Theorem \ref{t.equiv_II}]
The implication from (A) to (B) is clear. 
Let us prove the implication from (B) to (C).
Let $D\subset\subset G$ be an open set whose boundary $\partial D$ is 
an $(n-1)$-dimensional $C^\infty$-manifold in $G$.
By condition (B), Lemma \ref{l.sard_type},
Remark \ref{r.comparion}, and Theorem \ref{l.regular}, we obtain that
\[
\lvert D\rvert^{1/q}\le \lvert \overline{D}\rvert^{1/q}\le C\, \mathrm{cap}_{\delta,1}(\overline{D},G)\le 2 C\,P_\delta(D,G)\,.
\]
This implies condition (C). 

Let us prove the implication
from (C) to (A). 
We fix a measurable function $u:G\to \R$ with compact support in $G$.
Without loss of generality, we may assume that $u\in \dot W^{\delta,1}(G)$.  
By $1$-Lipschitz truncation and monotone convergence theorem, we may assume that $u$ is bounded.
In particular,  $u\in L^{q}(G)$.
Let us write $u_j=\lvert u\rvert \ast \varphi_j\ge 0$.
%
Since $\lvert u\rvert \in L^{q}(G)$ and $\lvert u\rvert$ has a compact support in $G$,
\begin{align*}
\bigg(\int_G \lvert u(x)\rvert^{q}\,dx\bigg)^{1/q} = \lim_{j\to\infty}Ê\bigg(\int_G u_j(x)^{q}\,dx\bigg)^{1/q}\,.
\end{align*}
We follow an argument given in  \cite[\S 7]{L} and we focus on sufficiently large values of $j$ so that $u_j\in C^\infty_0(G)$.
By  Minkowski's integral inequality,
\begin{align*}
\bigg(\int_G u_j(x)^{q}\,dx\bigg)^{1/q}&=\bigg(\int_{G}Ê\bigg(\int_0^\infty \chi_{\{ u_jÊ> t\}}(x)\,dt\bigg)^{q}\,dx\bigg)^{1/q}
\\&\le \int_0^\infty \bigg(\int_{G} \chi_{\{  u_jÊ> t\}}(x) \,dx\bigg)^{1/q}\,dt
= \int_0^\infty \lvert \{  u_jÊ> t\} \rvert^{1/q}\,dt\,.
\end{align*}
By Sard's theorem \cite{Sard} for almost
every $t>0$,  the gradient  of 
$u_j$ differs from zero at every point in the level set $\{x\in G\,:\, u_j(x)=t\}$.
For these particular values of $t>0$,  the  boundary of an open set \[\{x\in G\,:\,  u_j(x)Ê> t\}\subset\subset G\]
coincides with the level set $\{x\in G\,:\, u_j(x)=t\}$ and, moreover, this level set is a compact $(n-1)$-dimensional $C^\infty$-manifold in $G$ by the implicit function theorem.

Hence, by condition (C), Lemma \ref{l.coarea},  and Lemma \ref{l.approx}
applied to $\lvert u\rvert\in \dot{W}^{\delta,1}(G)$, we obtain
\begin{align*}
&\bigg(\int_G  u_j(x)^{q}\,dx\bigg)^{1/q}\\&\le 2C\int_0^\infty P_\delta(\{  u_j  > t\},G)\,dt = \frac{2C}{2}\lvert u_j  \rvert_{W^{\delta,1}(G)}\xrightarrow{j\to\infty} C\big\lvert \lvert u\rvert \big\rvert_{W^{\delta,1}(G)}\le C\lvert u\rvert_{W^{\delta,1}(G)}\,.
\end{align*}
Condition (A) follows. 
\end{proof}

\end{document}